\def\N{\mathbb N}
\def\A{\mathcal A}
\def\B{\mathcal B}
\def\uu{\mathbf{u}}
\def\yy{\mathbf{y}}
\def\zz{\mathbf{z}}
\def\xx{\mathbf{x}}
\newtheorem{thm}{Theorem}
\newtheorem{theorem}[thm]{Theorem}
\newtheorem{lemma}[thm]{Lemma}
\newtheorem{proposition}[thm]{Proposition}
\newtheorem{defi}[thm]{Definition}
\crefname{thm}{theorem}{theorems}
\crefname{theorem}{theorem}{theorems}
\crefname{coro}{corollary}{corollaries}
\crefname{example}{example}{examples}
\crefname{lemma}{lemma}{lemmas}
\crefname{claim}{claim}{claims}
\crefname{obs}{observation}{observations}
\crefname{proposition}{proposition}{propositions}
\crefname{prop}{proposition}{propositions}
\crefname{defi}{definition}{definitions}
\crefname{rem}{remark}{remarks}
\newtheorem{remark}[thm]{Remark}
\newtheorem{example}[thm]{Example}
\begin{document}

\begin{frontmatter}

\title{ A note on symmetries of rich sequences with minimum critical exponent}

\author[fn]{Ľubom\'ira Dvo\v r\'akov\'a\corref{ss}}\ead{lubomira.dvorakova@fjfi.cvut.cz}

\author[fn]{ \ Edita  Pelantov\'a}

\affiliation[fn]{organization={Department of Mathematics, FNSPE, Czech Technical University in Prague},
            addressline={Trojanova 13}, 
            city={Prague},
            postcode={12000}, 
            country={Czech Republic}}

\cortext[ss]{corresponding author}


\begin{abstract} 
Using three examples of sequences over a finite alphabet, we want to draw attention to the fact that these sequences having the minimum critical exponent in a given class of sequences show a large degree of symmetry, i.e., they are $G$-rich with respect to a group G generated by more than one antimorphism.
The notion of $G$-richness generalizes the notion of richness in palindromes which is based on one antimorphism, namely the reversal mapping. The three examples are: \\ 1)~the Thue-Morse sequence which has the minimum critical exponent among all binary sequences; \\ 2)~the sequence  which has the minimum critical exponent among all binary rich sequences;\\  3)~the sequence  which has the minimum critical exponent among all ternary rich sequences.

\end{abstract}


\begin{keyword}
repetition threshold \sep critical exponent \sep sequences rich in palindromes \sep G-rich words \sep Thue-Morse sequence
\MSC 68R15
\end{keyword}

\end{frontmatter}




\section{Introduction}\label{sec:Introduction}
Droubay, Justin and Pirilo~\citep{DrJuPi2001} showed that a~word $w$ of length $n$ contains at most $n+1$ distinct palindromes.   If this bound is reached, we say that $w$ is {\em rich in palindromes}, or simply {\em rich}. They also proved that each factor of a rich word is again rich, i.e., the language of rich words is factorial. 
A sequence (infinite word) ${\bf w} = w_0w_1w_2 \cdots$ is called {\em rich in palindromes} if each factor of $\bf w$ is rich. The most popular sequence rich in palindromes is the Fibonacci sequence. 
A {\em palindrome}, which is the central notion in the definition of palindromic richness, may be formally defined as a word $w=w_0w_1\cdots w_{n-1}$ for which its reversal $R(w)=w_{n-1}w_{n-2}\cdots w_0$ is equal to $w$.  If we choose a different involutive antimorphism $\Theta$, we may analogously define $\Theta$-richness.  Guo, Shallit and Shur~\citep{Guo2016} studied binary sequences rich in palindromes in the classical sense and also $E$-rich sequences, where $E$ is an antimorphism on the binary alphabet exchanging letters. They showed that the notion of $E$-richness is not interesting as every $E$-rich word is a~factor of the periodic sequence $(01)^\omega$.  

Another generalization of richness was introduced in~\citep{PStarosta2013},  where instead of one antimorphism, a group $G$ generated by involutive antimorphisms is considered. A word $w$ is called $G$-{\em palindrome} if $w = \Theta(w)$ for some involutive antimorphism $\Theta\in G$. The number of distinct $G$-palindromes occurring in a word $w$ is bounded from above by a term depending on $G$ and on the length of $w$. If this upper bound is attained, $w$ is called $G$-{\em rich}. Analogously, if every factor of a sequence ${\bf w}$ is $G$-rich, the sequence is said to be $G$-{\em rich}. $G$-richness is well-understood on the binary alphabet, where we have only two involutive antimorphisms $R$ and $E$ and $G$ is generated by both of them. On one hand, the most famous $G$-rich sequence is the Thue-Morse sequence ${\bf t}$, which is however not rich in the classical sense \citep{PStarosta2013}. On the other hand, every complementary symmetric Rote sequence~\footnote{A binary sequence is a complementary symmetric Rote sequence if its language is closed under
letter exchange and it has factor complexity $C(n) = 2n$ for all $n\geq 1$~\citep{Rote1994}.} is $G$-rich and simultaneously rich in the classical sense.  

Let us emphasize that the glory of the Thue-Morse sequence rests in the fact that this sequence has the smallest critical exponent among all binary sequences. Recall that $r \in \mathbb{R}$ is the {\em critical exponent} of a~sequence ${\bf w}$ if no repetition $r'>r$ occurs in $\uu$ and $r$ is the largest number with this property.

Vesti in \citep{Vesti2014} suggested to study the critical exponent of sequences rich in palindromes. Baranwal and Shallit~\citep{BaSh19} focused on rich sequences over binary alphabet. They defined a~rich sequence $\uu$ with the critical exponent $2+\frac{\sqrt{2}}{2}$ and conjectured that no other binary rich sequence has a smaller critical exponent. This conjecture was proven by Currie, Mol and Rampersad~\citep{CuMoRa2020}. It is natural to call this value the {\em repetition threshold} of binary rich sequences. 
We explain in Section~\ref{sec:binary} that binary rich sequences reaching the repetition threshold are $G$-rich for the group $G$ generated by $R$ and $E$.

Recently,  Currie, Mol and Peltomäki~\citep{CuMoPe2024} constructed a sequence $\zz$ over the alphabet $\{0,1,2\}$ and showed that it has the smallest critical exponent among all ternary rich sequences. More precisely, they showed that the repetition threshold of ternary rich sequences equals $1+\frac{1}{3-\mu}\doteq 2.25876324$, where $\mu$ is the unique real root of the polynomial $x^3-2x^2-1$. The authors themselves noticed that the language of $\zz$ is closed under the antimorphism $S$ exchanging letters $1 \leftrightarrow 2$. We show that the sequence $\zz$ is $G$-rich for the group $G$ generated by $R$ and $S$, see Theorem~\ref{thm:G-richness of z} in Section~\ref{sec:ternary}. Their paper is very long, thus the task to find suitable candidates for rich sequences with the smallest critical exponent over larger alphabets will be certainly computationally expensive. Our result indicates that suitable candidates might be found among $G$-rich sequences with properly chosen group $G$.

\section{Preliminaries}
An \textit{alphabet} $\mathcal A$ is a finite set, its elements are \textit{letters}. A \textit{word} $u$ over $\mathcal A$ of \textit{length} $n$ is a finite sequence $u = u_0 u_1 \cdots u_{n-1}$ of letters $u_j\in\mathcal A$ for all $j \in \{0,1,\dots, n-1\}$. The length of $u$ is denoted $|u|$. The set of all finite words over $\A$ is denoted $\A^*$. The set $\A^*$ equipped with concatenation as the operation forms a monoid with the \textit{empty word} $\varepsilon$ as the neutral element. Consider $u, p, s, v \in \A^*$ such that $u=pvs$, then the word $p$ is called a \textit{prefix}, the word $s$ a \textit{suffix} and the word $v$ a \textit{factor} of $u$. 
A~\textit{sequence} $\uu$ over $\A$ is an infinite sequence $\uu = u_0 u_1 u_2 \cdots$ of letters $u_j \in \A$ for all $j \in \N$. A \textit{word} $w$ over $\mathcal A$ is called a~\textit{factor} of the sequence $\uu = u_0 u_1 u_2 \cdots$ if there exists $j \in \mathbb N$ such that $w = u_j u_{j+1} u_{j+2} \cdots u_{j+|w|-1}$. If $j=0$, then $w$ is a \textit{prefix} of $\uu$.

The \textit{language} $\mathcal{L}(\uu)$ of a sequence $\uu$ is the set of factors occurring in $\uu$, the set of factors of length $n$ is denoted $\mathcal{L}_n(\uu)$.
The \textit{factor complexity} of a sequence $\uu$ is a mapping ${\mathcal C}:\mathbb N \to \mathbb N$, where $${\mathcal C}(n)=\#{\mathcal L}_n(\uu)\,.$$

A~factor $w$ of a sequence $\uu$ is \textit{left special} if $iw, jw \in \mathcal{L}(\uu)$ for at least two distinct letters ${i, j} \in \A$. A \textit{right special} factor is defined analogously.

A sequence $\uu$ is \textit{recurrent} if each factor of $\uu$ has infinitely many occurrences in $\uu$. Moreover, a recurrent sequence $\uu$ is \textit{uniformly recurrent} if for every $n \in \mathbb N$ there exists $N\in \mathbb N$ such that each factor of $\uu$ of length $N$ contains all factors of $\uu$ of length $n$. 

A \textit{morphism} is a map $\psi: \A^* \to \B^*$ such that $\psi(uv) = \psi(u)\psi(v)$ for all words $u, v \in \A^*$.
The morphism $\psi$ can be naturally extended to a sequence $\uu=u_0 u_1 u_2\cdots$ over $\A$ by setting
$\psi(\uu) = \psi(u_0) \psi(u_1) \psi(u_2) \cdots\,$.
If a morphism $\psi: \A^* \to \A^*$ satisfies $\psi(i) \not =\varepsilon$ for every letter $i \in \mathcal A$ and there exists $j \in \A$ and $w\in\A^*, w\not =\varepsilon$, such that $\psi(j)=jw$, then there exists a sequence $\uu$ having the prefix $\psi^n(j)$ for every $n \in \mathbb N$, thus $\psi(\uu)=\uu$ and $\uu$ is a~\textit{fixed point} of $\psi$. 
In the sequel, we use the notation $\psi^{\omega}(j)$.
A~morphism $\psi: \A^* \to \A^*$ is \textit{primitive} if there exists $k\in \mathbb N$ such that $\psi^k(i)$ contains all letters of $\mathcal A$ for each letter $i \in \mathcal A$. It is known that a fixed point of a primitive morphism is uniformly recurrent.

An \textit{antimorphism} is a map $\Theta: \A^* \to \A^*$ such that $\Theta(uv) = \Theta(v)\Theta(u)$ for all words $u, v \in \A^*$.
The \textit{reversal} mapping $R:\A^* \to \A^*$ is an antimorphism satisfying $R(i)=i$ for each letter $i \in \mathcal A$.
The language $\mathcal{L}(\uu)$ is called \textit{closed under} $\Theta$ if for each factor $w$, the word $\Theta(w)$ is also a factor of $\uu$.
A word $w \in \A^*$ is a $\Theta$-\textit{palindrome} if $w=\Theta(w)$. If a uniformly recurrent sequence $\uu$ contains infinitely many $\Theta$-palindromic factors, then its language ${\mathcal L}(\uu)$ is closed under $\Theta$. 
The \textit{$\Theta$-palindromic complexity} of a sequence $\uu$ is a~mapping ${\mathcal P}_\Theta:\mathbb N \to \mathbb N$, where $${\mathcal P}_\Theta(n)=\#\{w \in {\mathcal L}_n(\uu) \ : \ w =\Theta(w)\}\,.$$ 

\section{The rich ternary sequence $\zz$ with the minimum critical exponent as a morphic image of a fixed point}
Currie, Mol and Peltomäki~\citep{CuMoPe2024} showed that the repetition threshold for ternary rich sequences is reached by the sequence $\zz$, defined below. Moreover, every sequence that reaches the threshold has the same language as $\zz$. In the sequel, we keep notation from~\citep{CuMoPe2024}.
Let the morphisms $f, g: \{0,1,2\}^* \mapsto \{0,1,2\}^*$ be defined by letter images
$$\begin{array}{ccc}
f: \left\{\begin{array}{lll}
    0&\to & 01  \\
    1&\to & 022  \\    
    2&\to & 02  
\end{array}\right. & \qquad \text{and}\qquad &
g: \left\{\begin{array}{lll}
    0&\to & 20  \\
    1&\to & 21  \\    
    2&\to & 2  
\end{array}\right.\,.
\end{array}$$
Denote $\xx=f^\omega(0)$ and  $\yy = g(\xx)= y_0y_1y_2 \cdots$. 

As the last step, a transducer $\tau$ is applied to $\yy$.
To describe the action of the transducer $\tau$ we denote 
\begin{equation}\label{eq:ABCD}
A= 00101101, \quad B = 001, \quad C = 00202202 \text{ and } D = 002. 
\end{equation} $\tau$  maps the letters standing at even positions and odd positions in a different way, namely
$$\tau(y_{2i}) = \left\{\begin{array}{ll}B& \text{if \ } y_{2i} = 0\\
A& \text{if \ } y_{2i} = 1\\
AA& \text{if \ } y_{2i} = 2\end{array}\right.\qquad \text{and}\qquad \tau(y_{2i+1}) = \left\{\begin{array}{ll}D& \text{if \ } y_{2i+1} = 0\\
C& \text{if \ } y_{2i+1} = 1\\
CC& \text{if \ } y_{2i+1} = 2\end{array}\right.\,.
$$  
The sequence \begin{equation}\label{eq:Z}\zz = \tau(\yy)= \tau\bigl(g(f^\omega(0))\bigr)\end{equation}is a rich sequence with the minimum critical exponent.

\bigskip

We will show that $\zz$ may be obtained as a morphic image of a fixed point. Consider two morphisms $\varphi: \{0,1,2,3,4\}^*\mapsto \{0,1,2,3,4\}^*$ and  $\psi: \{0,1,2,3,4\}^*\mapsto \{0,1,2\}^*$ 
$$\begin{array}{ccc}
\varphi: \left\{\begin{array}{lll}
    0&\to & 01  \\
    1&\to & 02  \\    
    2&\to & 03\\ 
 3&\to & 04\\ 
4&\to & 044
\end{array}\right. & \qquad \text{and}\qquad &
\psi: \left\{\begin{array}{lll}
    0&\to & 0  \\
    1&\to & 1  \\    
    2&\to & 22\\ 
 3&\to & 202\\ 
4&\to & 20102
\end{array}\right.\,.
\end{array}$$
Denote $\uu=\varphi^{\omega}(0)$, a prefix of $\uu$ reads\\
\noindent $\uu= 0102010301020104010201030102010440102010301020104010201030102010440440102\cdots
$

To reach our goal, i.e., to show that $\zz$ is a morphic image of a fixed point, the following lemma is handy.

\begin{lemma}\label{lem:1} $f^n\psi = \psi \varphi^n$ for every  $n \in \N$. 
    \end{lemma}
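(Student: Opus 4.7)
The proof is a routine induction on $n$, with the substantive content concentrated in the base-case identity $f\psi=\psi\varphi$.

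The plan is to proceed by induction on $n\in\N$. For $n=0$ both sides equal $\psi$, so the statement is trivial. For the inductive step, assume $f^n\psi=\psi\varphi^n$. Then
\[
 f^{n+1}\psi \;=\; f\circ(f^n\psi)\;=\; f\circ(\psi\varphi^n)\;=\;(f\psi)\varphi^n,
\]
so it suffices to prove the single identity $f\psi=\psi\varphi$; once this is known, the right-hand side becomes $\psi\varphi\cdot\varphi^n=\psi\varphi^{n+1}$, completing the induction.

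To establish $f\psi=\psi\varphi$, I would invoke the standard fact that a morphism is determined by its values on letters, so it is enough to verify $f(\psi(i))=\psi(\varphi(i))$ for each $i\in\{0,1,2,3,4\}$. This is a short, case-by-case computation using the definitions of $f$, $\psi$, and $\varphi$: for example $f(\psi(0))=f(0)=01=\psi(0)\psi(1)=\psi(\varphi(0))$, $f(\psi(2))=f(22)=0202=\psi(0)\psi(3)=\psi(\varphi(2))$, and the remaining three letters are checked analogously. Each check amounts to comparing two explicit short words over $\{0,1,2\}$, so no subtlety arises.

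There is no real obstacle here; the only thing one has to be careful about is the order of composition, namely that $f^{n+1}\psi=f\cdot(f^n\psi)$ rather than $(f^n\psi)\cdot f$, since $\psi$ maps into the domain of $f$ but not the reverse. Once the base identity $f\psi=\psi\varphi$ is verified letterwise, the induction closes immediately and the lemma follows.
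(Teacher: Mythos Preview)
Your proof is correct and follows essentially the same approach as the paper: verify the base identity $f\psi=\psi\varphi$ by checking the five letter images, then close by a routine induction on $n$. The paper spells out all five images explicitly, but otherwise the arguments coincide.
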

    \begin{proof} It is easy to see that $f \psi = \psi \varphi$ as both morphisms map  
    
    \centerline{$ 0\mapsto 01, \ \ 1 \mapsto 022,  \ \ 2 \mapsto 0202, \ \ 3 \mapsto 020102, \ \ 4 \mapsto 02010220102 $\,.}

      Let us complete the proof by induction on $n \in \N$. We have just shown the statement for $n=1$ and it is trivial for $n=0$. 
        Let $n\geq 2$. Using the induction hypothesis and the validity of the statement for $n=1$, we get 

$$f^n \psi =f \bigl(f^{n-1}\psi \bigr) =f \bigl(\psi \varphi^{n-1} \bigr)= \bigl(f\psi\bigr)\varphi^{n-1} =\bigl(\psi \varphi\bigr) \varphi^{n-1} = \psi \varphi^n\,. 
$$
  \end{proof}
\begin{proposition}  Using words $A,B,C,D$ from~\eqref{eq:ABCD}, we can write  $\zz = \xi( \varphi^\omega(0))$, where $\xi:\{0,1,2,3,4\}^*\to \{0,1,2\}^*$ is a morphism $$\xi:\ \ \left\{\begin{array}{ccl}
     0& \mapsto& A\\
    1& \mapsto &AD \\
     2& \mapsto &AC  \\
      3& \mapsto & ACC \\
       4& \mapsto & ACCBCC\\
\end{array}\right..$$     
In particular, $\zz$ is uniformly recurrent. 
\end{proposition}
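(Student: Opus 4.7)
The plan is to run $\zz$ backward through the construction and re-express each ingredient in the $(\varphi,\psi)$ language. First, applying Lemma~\ref{lem:1} to the letter $0$ and using $\psi(0)=0$ gives $f^n(0) = \psi(\varphi^n(0))$ for every $n$, and passing to the limit yields $\xx = \psi(\uu)$; hence $\yy = g\psi(\uu)$ and $\zz = \tau(g\psi(\uu))$.

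Next I would turn the transducer $\tau$ into a genuine morphism acting on $\uu$. The key observation is that $|g\psi(a)|$ for $a=0,1,2,3,4$ equals $2,2,2,4,8$ respectively, all even. Hence in the decomposition $\yy = g\psi(u_0)\,g\psi(u_1)\,g\psi(u_2)\cdots$ every block $g\psi(u_k)$ starts at an even position of $\yy$, so $\tau$ acts on the blocks independently and the parities it uses are internal to each block. Defining $\tilde\xi:\{0,1,2,3,4\}^*\to\{0,1,2\}^*$ by $\tilde\xi(a):=\tau(g\psi(a))$ with the first symbol placed at an even position, a short calculation gives $\tilde\xi(0)=AAD$, $\tilde\xi(1)=AAC$, $\tilde\xi(2)=AACC$, $\tilde\xi(3)=AACCBCC$ and $\tilde\xi(4)=AACCBCCACCBCC$, so $\zz = \tilde\xi(\uu)$.

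The heart of the proof is then the identity $\tilde\xi = \xi\circ\varphi$, which I would verify by checking each letter individually: $\xi(\varphi(0)) = \xi(01) = A\cdot AD = AAD = \tilde\xi(0)$, and analogously for $a=1,2,3$; for $a=4$, $\varphi(4)=044$ gives $\xi(\varphi(4)) = A\cdot ACCBCC\cdot ACCBCC$, which matches $\tilde\xi(4)$ obtained from $g\psi(4)=22021202$. Since $\uu$ is a fixed point of $\varphi$, this yields $\zz = \tilde\xi(\uu) = \xi(\varphi(\uu)) = \xi(\uu)$.

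Finally, for uniform recurrence I would show that $\varphi$ is primitive by checking that $\varphi^4(a)$ contains every letter of $\{0,1,2,3,4\}$ for each $a$; the least obvious case is $a=4$, where letter $3$ first appears only after $\varphi$ has produced a $2$, requiring the fourth iterate. Primitivity makes $\uu$ uniformly recurrent, and because $\xi$ is non-erasing, the image $\zz = \xi(\uu)$ is uniformly recurrent as well. I expect the main technical hurdle to be the parity bookkeeping needed to justify $\tilde\xi$ as a morphism and the symbol-by-symbol matching of $\tilde\xi(4)$ against $\xi(\varphi(4))$.
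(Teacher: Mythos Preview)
Your proposal is correct and follows essentially the same route as the paper: use Lemma~\ref{lem:1} to write $\yy=(g\psi)(\uu)$, observe that each $g\psi(a)$ has even length so that $\tau g\psi$ is a genuine morphism (your $\tilde\xi$), check letter by letter that $\tilde\xi=\xi\varphi$, and conclude via the fixed-point property and primitivity of $\varphi$. The paper presents these steps more tersely and does not spell out the primitivity check or the non-erasing observation for $\xi$, but the argument is the same.
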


\begin{proof} By Lemma \ref{lem:1}, $g\bigl(f^\omega(0)\bigr) = g \bigl(f^\omega(\psi(0))\bigr) = g\bigl(\psi \varphi^\omega(0)\bigr)= (g\psi)\varphi^\omega(0)$.  The composition $g\psi$  maps letters of $\{0,1,2,3,4\}$ as follows
\medskip

    \centerline{$ 0\mapsto 20, \ \ 1 \mapsto 21,  \ \ 2 \mapsto 22, \ \ 3 \mapsto 2202, \ \ 4 \mapsto 22021202 $.}
\medskip
    \noindent 
As $\yy =g\bigl(f^\omega(0)\bigr)= (g\psi)\varphi^\omega(0)$ and the length of 
 $g\psi(a)$ is even for every letter $a \in \{0,1,2,3,4\} $,  we may give an explicit formula for $\tau g\psi$
$$\tau g\psi:\ \ \left\{\begin{array}{ccl}
     0& \mapsto& AA D\\
    1& \mapsto &AAC  \\
     2& \mapsto &AACC  \\
      3& \mapsto & AACCBCC \\
       4& \mapsto & AACCBCCACCBCC\\
\end{array}\right.\,.$$ 
It is straightforward  to check that $\tau g \psi = \xi \varphi$.  Hence $\zz = \tau(\yy) = (\tau g \psi) \varphi^\omega(0) = (\xi\varphi)(\varphi^{\omega}(0))=\xi \bigl(\varphi (\varphi^\omega(0)\bigr) = \xi \bigl(\varphi^\omega(0)\bigr)$.  
Uniform recurrence follows from the fact that fixed points of primitive morphisms are uniformly recurrent.   
\end{proof}

\section{Definition of generalized richness of sequences} 
In this section, we mainly cite definitions and results from~\citep{PStarosta2014}.
Throughout the paper, let $G$ be a finite group of morphisms and antimorphisms on $\mathcal{A}^*$ such that $G$ contains at least one antimorphism.  We say that a sequence $\uu$ over $\mathcal{A}$ is closed under $G$ if, for every $\eta \in G$ and every factor $w$ of $\uu$, the word $\eta(w)$ is a factor of $\uu$, too. On the set of factors of $\uu$ we define an equivalence as usual $$ w\sim v \quad \Longleftrightarrow \quad w=\eta(v)\ \text{for some} \ \eta \in G.
 $$
Then $[w]$ denotes the class of equivalence containing $w$, i.e., $[w]$ is the orbit of $w$ under the action of the group $G$. 
 
To recall the definition of $G$-richness, we need to introduce the graph of symmetries of $\uu$.    

\begin{defi}\label{de:grafSymetrie} Let $\uu$ be a sequence with language closed under $G$ and let $n \in N$.  The undirected graph of symmetries of the sequence $\uu$ of order $n$ is ${\Gamma}_n(\uu) = (V, E)$ with the set of vertices 
    $$ V =\{[w]: w\in \mathcal{L}_n(\uu), \ \ w \ \text{\ is left or right special}  \}
    . $$ Two vertices $[w]\in V$ and  $[v]\in V$  are connected by  an edge $[e] \in E $ if 
    there exists a factor $u \in [e] \subset \mathcal{L}(\uu)$ of length $>n$ such that
    \begin{enumerate}
        \item the prefix of $u$ of length $n$ belongs to $[w]$,
        \item the suffix of $u$ of length $n$ belongs to $[v]$,
        \item $u$ contains no other special factor of length $n$ (except the prefix and the suffix of length n). 
    \end{enumerate} 
\end{defi}
Note that the role of $[w]$  and $[v]$ in the previous definition of edges is symmetric, as the language of $\uu$ is closed under $G$.

\begin{example}  Let us construct $\Gamma_2(\uu)=(V, E)$  for the group $G= \{I,R\}$, where $\uu$ is the fixed point of $\varphi$. 
The left and right special factors of $\uu$ of length 2 are: $01,10,04,40$, thus $V=\{[01], [04]\}$. 

\noindent Factors of length $>2$ satisfying items 1., 2. and 3. in Definition \ref{de:grafSymetrie} are: 

\medskip

\centerline{$010, \ 10201, \ 10301, \ 040, \ 0440, \ 104,  \ 401$.}

\medskip
\noindent Hence the vertex $[01]$ has three loops, namely $[010], [10201], [10301]$, the vertex $[04]$ has two loops, namely $[0440]$ and $[040]$. The vertices $[04]$ and $[01]$ are connected by one edge $[104]$, see Figure~\ref{fig:symmetry_graph}. 
    \end{example}

\begin{figure}[h!]
\begin{center}
\begin{tikzpicture}[every loop/.style={},node distance=3cm,
                    thick,main node/.style={circle,draw,font=\sffamily\small\bfseries}]

  \node[main node] (1) {$[01]$};
  \node[main node] (2) [right of=1] {$[04]$};

  \path[every node/.style={font=\sffamily\small}]
    (1) edge node [above] {$[104]$} (2)
     (2) edge [loop right] node [right] {$[040]$} (2)
     (2) edge [loop above] node [above] {$[0440]$} (2)
    (1) edge [loop left] node [left] {$[010]$} (1)
       edge [loop above] node [above]{$[10201]$} (1)
       edge [loop below] node [below]{$[10301]$} (1);

\end{tikzpicture}
\caption{The graph of symmetries ${\Gamma}_2(\uu)$ for $\uu=\varphi^{\omega}(0)$ and the group $G=\{I,R\}$.}\label{fig:symmetry_graph}
\end{center}
\end{figure}
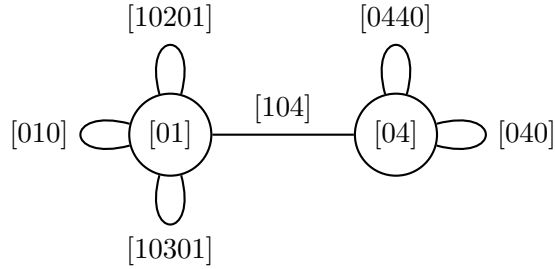
\begin{defi}\label{def:tls} Let $\uu$ be a sequence with the language closed under $G$.
 We say that $\uu$ has Property $G$-$tls(N)$, if for every $n \geq  N$ the following two conditions are satisfied: 
 
 \begin{itemize}
     \item the graph $\Gamma_\uu(n)$ after removing the loops is a tree; 
     \item if an edge $[e]$ is a loop in $\Gamma_\uu(n)$, then $e$ is a $\Theta$-palindrome for some antimorphism $\Theta \in G$. 
 \end{itemize}
 
\end{defi}
Note that $tls$ is an abbreviation for ``tree like structure''. 

\begin{defi}\label{def:Grichness} Let $\uu$ be a sequence with the language closed under $G$.
 We say that $\uu$ is $G$-rich,  if $\uu$ has Property $G$-$tls(1)$.  
\end{defi}

Recall that a recurrent sequence is rich if and only if it is $G$-rich for $G=\{I,R\}$.

Sequences rich in the classical sense may be characterized by various ways: using complete return words~\citep{GlJuWi2009},  using occurrences of the longest palindromic suffixes of words,   the extensions of bispecial factors~\citep{BaPeSt2010}, etc. All these characterizations also have analogies for $G$-richness, see \citep{PStarosta2014}. 
One of the tools for proving $G$-richness is based on the relation between palindromic and factor complexity. For the classical richness this relation was observed and proved in~\citep{BuLuGlZa2009}. 
 
The set of involutive antimorphisms of $G$ is denoted  $G^{(2)}$, i.e.,  
$$G^{(2)} = \{\psi\in G: \psi \text{ is an antimorphism and }\psi^2 =I   \}\,.$$ 
\begin{remark} In general, $G$ may contain also antimorphisms that are not involutive, however, in this paper, we will deal only with groups whose  antimorphisms are all involutive.  
\begin{itemize}
\item For ``classical'' richness, where $G$  contains only identity $I$ and reversal mapping $R$, we have $G^{(2)} = \{R\}$. 

\item In Section~\ref{sec:ternary}, we focus on the group $G=\{I, R, S, RS\}$, where  $R$ and $S$ are antimorphisms on the ternary alphabet $\{0,1,2\}$. $R$ denotes the reversal mapping and $S$ is defined  by $S(0)=0, S(1)=2$  and $S(2)=1$.  In this case $G^{(2)} = \{R, S\}$. 

\item In Section~\ref{sec:binary}, the considered group $G=\{I, R, E, ER\}$, where $R$ is the reversal mapping and $E$ is the antimorphism exchanging letters. In this case $G^{(2)} = \{R, E\}$.
\end{itemize}

\end{remark} 
 
We say that $N\in \N$ is $G$-distinguishing on $\uu$ if, for every factor $w \in \mathcal{L}_N(\uu)$ and every pair of antimorphisms $\Theta_1,\Theta_2 \in G$, the following implication holds: $ \Theta_1(w) = \Theta_2(w) \Rightarrow \Theta_1 = \Theta_2$. 

\begin{example}\label{ex:disting3} Consider  the group $G= \{I, R, S, RS\}$.  Any factor of length $3$ occurring in $\zz$ contains either the letter $1$ or $2$. Thus $R(w)\neq S(w)$ if $|w|\geq 3$ and therefore $N=3$ is $G$-distinguishing on $\zz$. 
    
\end{example}

\begin{theorem}[\citep{PStarosta2013}]\label{thm:Nerovnost}
 Let $\uu$ be a sequence with language closed under $G$ and let $N\in \N$ be $G$-distinguishing on $\uu$. 
 \begin{enumerate}
     \item For every $n \geq N$,  \begin{equation}\label{eq:nepreleze}\mathcal{C}(n+1)- \mathcal{C}(n) +\#G \geq \sum_{\Theta \in G^{(2)}} \mathcal{P}_{\Theta}(n) +\mathcal{P}_{\Theta}(n+1). \end{equation}
     
     \item $\uu$ has Property $G$-$tls(N)$ if and only if the equality is attained for all $n\geq N$ in~\eqref{eq:nepreleze}. 
\end{enumerate}

\end{theorem}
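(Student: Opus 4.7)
My approach would be to analyze the Rauzy graph $R_n(\uu)$ — with vertex set $\mathcal{L}_n(\uu)$ and an edge $u\in\mathcal{L}_{n+1}(\uu)$ running from its length-$n$ prefix to its length-$n$ suffix — together with its quotient $R_n(\uu)/G$ (antimorphisms reverse edge orientation, so the quotient is naturally undirected). Contracting every degree-$2$ vertex preserves both the number of connected components $c$ and the first Betti number $\beta_1=|E|-|V|+c$, and the resulting graph is precisely $\Gamma_n(\uu)$ of Definition~\ref{de:grafSymetrie}; hence I may pass freely between the two.

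The crucial step is to use $G$-distinguishing to describe stabilizers. For any factor $w$ with $|w|\geq N$, the stabilizer $\mathrm{Stab}_G(w)$ is either $\{I\}$ or $\{I,\Theta\}$ for a unique $\Theta\in G^{(2)}$: two involutive antimorphisms fixing $w$ must coincide by hypothesis, and any morphism $\phi\in\mathrm{Stab}_G(w)$ can be written as $\phi=\Theta_1\Theta_2$ with $\Theta_i\in G^{(2)}$, so $\Theta_1(w)=\Theta_2(w)$ forces $\Theta_1=\Theta_2$ and hence $\phi=I$. Therefore each $G$-orbit in $\mathcal{L}_n(\uu)$ has size $\#G$ or $\#G/2$, and in the latter case every element of the orbit is a palindrome for some antimorphism in $G^{(2)}$. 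Orbit counting then gives
\begin{align*}
|V(R_n(\uu)/G)| &= \tfrac{1}{\#G}\bigl(\mathcal{C}(n)+\textstyle\sum_{\Theta\in G^{(2)}}\mathcal{P}_\Theta(n)\bigr),\\
|E(R_n(\uu)/G)| &= \tfrac{1}{\#G}\bigl(\mathcal{C}(n+1)+\textstyle\sum_{\Theta\in G^{(2)}}\mathcal{P}_\Theta(n+1)\bigr),
\end{align*}
and every $\Theta$-palindromic edge of length $n+1$ becomes a loop of the quotient (since $\Theta$ swaps its prefix and suffix), producing at least $\tfrac{2}{\#G}\sum_{\Theta\in G^{(2)}}\mathcal{P}_\Theta(n+1)$ loops.

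Assembling these ingredients, the universal inequality $\beta_1(\Gamma)\geq L_{\mathrm{loops}}(\Gamma)$ for finite multigraphs (stripping loops decreases $\beta_1$ by exactly that amount, and what remains still satisfies $\beta_1\geq 0$), together with $c=1$ for $R_n(\uu)/G$ (which follows from recurrence of $\uu$), yields after clearing denominators exactly inequality~\eqref{eq:nepreleze}. For part~(2), equality for every $n\geq N$ simultaneously forces $\beta_1=L_{\mathrm{loops}}$ (so the loop-free subgraph of $\Gamma_n(\uu)$ is a forest, upgraded to a tree by connectedness) and $L_{\mathrm{loops}}=\tfrac{2}{\#G}\sum_\Theta\mathcal{P}_\Theta(n+1)$ (so every loop arises from a palindromic edge) — which is precisely Property $G$-$tls(N)$. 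The step I expect to require the most care is excluding non-palindromic loops in the quotient: if $[\mathrm{prefix}_n(u)]=[\mathrm{suffix}_n(u)]$ is realised by some $\eta\in G$, one must argue that a suitable modification of $\eta$ in fact fixes $u$ itself — a further targeted use of $G$-distinguishing applied to factors of length $\geq N$.
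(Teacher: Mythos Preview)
The paper does not contain a proof of this theorem: it is quoted verbatim from \citep{PStarosta2013} and used as a black box in the proof of Theorem~\ref{thm:G-richness of z}. There is therefore no ``paper's own proof'' to compare against here.

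That said, your sketch is essentially the argument given in the original source. The Rauzy graph quotiented by $G$, the orbit--stabilizer count (using $G$-distinguishing to pin stabilizers to $\{I\}$ or $\{I,\Theta\}$), and the Euler-characteristic/Betti-number bookkeeping are exactly the ingredients used there. One small correction to your stabilizer step: you do not need to factor a morphism $\phi\in\mathrm{Stab}_G(w)$ as a product of two \emph{involutive} antimorphisms (which would require extra hypotheses on $G$); it suffices to pick any antimorphism $\Theta\in G$ and note that $\Theta\phi$ and $\Theta$ are antimorphisms agreeing on $w$, whence $\phi=I$ by $G$-distinguishing. The same trick shows that any antimorphism fixing $w$ is automatically involutive. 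The point you flag as delicate --- ruling out non-palindromic loops in the equality case --- is handled in the original by the observation that if $[\mathrm{prefix}_n(e)]=[\mathrm{suffix}_n(e)]$ via some $\eta\in G$, then closure of the language under $G$ lets one replace $e$ by an element of $[e]$ that is a $\Theta$-palindrome; this is indeed where $G$-distinguishing is used most sharply, and your instinct to isolate it is correct.
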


\section{G-richness of the rich ternary sequence $\zz$}\label{sec:ternary}

In the paper~\citep{CuMoPe2024} it is proven that $\zz$ is rich in the classical sense. 
Our goal is to show that $\zz$ is also $G$-rich, where $G= \{I,R,S,RS\}$. Let us summarize the properties of $\zz$ deduced in~\citep{CuMoPe2024}. 
\begin{proposition}\label{pro:richnessZ} Let $\zz$ be the sequence   defined in \eqref{eq:Z}. Denote by $ \mathcal{C}$ and $\mathcal{P}_R$  its  factor and  $R$-palindromic complexity function, respectively.   Then 
\begin{enumerate}
    \item $\mathcal{C}(0)=1$, $\mathcal{C}(1)=3$, $\mathcal{C}(2)=7$,  $\mathcal{C}(3)=12$ and 
$$\mathcal{C}(n) = 4n+2\quad \text{ for all } \  n\geq 4.$$
\item   $\mathcal{P}_R(0) = 1$, $\mathcal{P}_R(1) = \mathcal{P}_R(2) = 3$,  $\mathcal{P}_R(3) = 4$ and 
$$\mathcal{P}_R(n) =\left\{ \begin{array}{cl}
   2  & \text{ if $n\geq 5$ and $n$ is odd;}  \\
   4  & \text{ if $n\geq 4$ and $n$ is even.}
\end{array}\right. $$
\item The language of $\zz$ is closed under the antimorphism $S$. 
\end{enumerate} 

\end{proposition}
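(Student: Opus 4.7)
The plan is to treat the three items individually, leaning on the morphic representation $\zz = \xi(\varphi^\omega(0))$ just established, which presents $\zz$ as the image of a fixed point of the primitive morphism $\varphi$ under the letter-to-word coding $\xi$. Uniform recurrence, already recorded in the previous proposition, will allow us to infer global structure from sufficiently long prefixes.

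For item (1), I would compute $\mathcal{C}(n)$ for $n \leq 4$ from a sufficiently long explicit prefix of $\zz$. For the asymptotic formula $\mathcal{C}(n) = 4n+2$ valid for $n \geq 4$, the standard route is to analyze the left special factors (equivalently the Rauzy graphs) of $\zz$. Lifting left special factors of $\zz$ back through $\xi$ to $\varphi^\omega(0)$ and using that $\varphi$ is primitive with a transparent branching structure, one shows that from $n=4$ on there are exactly four left special factors of each length $n$, each extendable in two ways, which forces $\mathcal{C}(n+1) - \mathcal{C}(n) = 4$ and determines the additive constant.

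For item (2), I would first compute $\mathcal{P}_R(n)$ for small $n$ by direct inspection of a prefix of $\zz$. For large $n$, knowing from~\citep{CuMoPe2024} that $\zz$ is rich in the classical sense (i.e., $G$-rich for $G=\{I,R\}$), Theorem~\ref{thm:Nerovnost} applied with equality delivers $\mathcal{P}_R(n) + \mathcal{P}_R(n+1) = \mathcal{C}(n+1) - \mathcal{C}(n) = 4$ for $n \geq 4$. A parity argument then separates the sum into the claimed values $2$ and $4$: the block structure of $\tau$ (the words $A,B,C,D$ have fixed even/odd lengths) constrains where an $R$-palindrome can sit inside the block decomposition of $\zz$, and counting admissible centres in the two parity regimes pins down the individual palindromic complexities.

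For item (3), I would exploit the identities $RS(A) = C$ and $RS(B) = D$ that follow by direct computation from~\eqref{eq:ABCD}. Thus applying $RS$ block by block to any $\tau$-image swaps $A \leftrightarrow C$ and $B \leftrightarrow D$, which matches the even/odd alternation dictated by $\tau$ on $\yy$. Combining this with a suitable antimorphic closure of $\mathcal{L}(\yy)$ (inherited from $f$ and $g$, whose letter images on $\{1,2\}$ are already visibly compatible with the $1\leftrightarrow 2$ exchange) yields closure of $\mathcal{L}(\zz)$ under $S$. The main obstacle I foresee is the bookkeeping for the parity-dependent action of $\tau$: one must verify that an $S$-image of an arbitrary factor of $\zz$ (which in general begins and ends in the middle of a block) remains consistent with the $A/B$ versus $C/D$ dichotomy, which requires tracking block-offsets carefully rather than just reasoning block by block.
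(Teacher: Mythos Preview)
The paper does not prove this proposition at all: it is introduced with the sentence ``Let us summarize the properties of $\zz$ deduced in~\citep{CuMoPe2024}'' and is stated purely as a citation, with no argument given. So there is no proof in the paper to compare your proposal against; what you have written is an attempt to reprove results that the authors import wholesale from Currie, Mol and Peltom\"aki.

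As an independent sketch, your outline is broadly reasonable but contains a concrete slip in item~(2). For classical richness the group is $G=\{I,R\}$, so $\#G=2$ and the equality in Theorem~\ref{thm:Nerovnost} reads
\[
\mathcal{C}(n+1)-\mathcal{C}(n)+2=\mathcal{P}_R(n)+\mathcal{P}_R(n+1),
\]
which for $n\geq 4$ gives $\mathcal{P}_R(n)+\mathcal{P}_R(n+1)=4+2=6$, not $4$ as you wrote. (This is consistent with the claimed values $2$ and $4$ alternating by parity.) Note also that invoking richness of $\zz$ here means you are already relying on~\citep{CuMoPe2024}, so the argument is not self-contained; and once that identity is in hand you still owe a genuine argument pinning down which parity gives $2$ and which gives $4$, not just a promissory ``parity argument''. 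Your approach to item~(3) via $RS(A)=C$, $RS(B)=D$ is sound at the block level, but as you yourself flag, the offset bookkeeping for factors that begin or end mid-block is real work and is not carried out in your proposal.
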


To show $G$-richness of $\zz$, we have to examine $S$-palindromic complexity of $\zz$. The following lemma will be helpful for this purpose.
\begin{lemma}\label{lem: constructionS_palindromes}
Let $w$ be a factor of $\varphi^{\omega}(0)$. 
If $\xi(w)00$ is an $S$-palindrome, then $\xi(\varphi(w))00$ is an $S$-palindrome, too. 
\end{lemma}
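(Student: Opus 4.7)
The plan is to recast the hypothesis and the conclusion as symmetries at the level of the block decomposition over the alphabet $\{A,B,C,D\}$, where $A,B,C,D$ are the words from~\eqref{eq:ABCD}. The key observation is that each of the four words $A\cdot 00$, $B\cdot 00$, $C\cdot 00$, $D\cdot 00$ is an $R$-palindrome, easily checked by hand; equivalently $X\cdot 00=00\cdot R(X)$ for every block $X\in\{A,B,C,D\}$. Writing $S=R\circ E$ with $E$ the letter-exchange morphism on $\{0,1,2\}^{*}$ and using the block-level identities $E(A)=C$, $E(C)=A$, $E(B)=D$, $E(D)=B$, one obtains the commutation rule
\[00\cdot S(X)\;=\;\tau(X)\cdot 00\qquad\text{for every }X\in\{A,B,C,D\},\]
where $\tau$ is the involution on the block alphabet swapping $A\leftrightarrow C$ and $B\leftrightarrow D$. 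Iterating along any concatenation of blocks gives $00\cdot S(X_1X_2\cdots X_k)=\tau(X_k)\tau(X_{k-1})\cdots\tau(X_1)\cdot 00$.

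Since each $\xi(a)$ is itself a concatenation of blocks ($\xi(0)=A$, $\xi(1)=AD$, $\xi(2)=AC$, $\xi(3)=ACC$, $\xi(4)=ACCBCC$), the word $\xi(w)$ has a canonical block decomposition $W(w)\in\{A,B,C,D\}^{*}$. Introducing the antimorphism $\pi$ on $\{A,B,C,D\}^{*}$ defined by $\pi(X_1\cdots X_k)=\tau(X_k)\cdots\tau(X_1)$, the commutation rule translates the condition ``$\xi(w)\cdot 00$ is an $S$-palindrome'' into the purely combinatorial condition ``$W(w)=\pi(W(w))$,'' and the same equivalence holds for $\xi(\varphi(w))$. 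The lemma is thus reduced to showing: if $W(w)$ is a $\pi$-palindrome, then so is $W(\varphi(w))$.

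For this reduced statement I would use the decomposition $\varphi(a)=0\cdot\sigma(a)$ with $\sigma:0\mapsto 1,\,1\mapsto 2,\,2\mapsto 3,\,3\mapsto 4,\,4\mapsto 44$; combined with $\xi(0)=A$ this yields $W(\varphi(a))=A\cdot W(\sigma(a))$ for every letter $a$, and hence
\[W(\varphi(w))\;=\;A\cdot W(\sigma(w_0))\cdot A\cdot W(\sigma(w_1))\cdots A\cdot W(\sigma(w_{n-1})).\]
Applying the antimorphism $\pi$ to the right-hand side (using $\pi(A)=C$) gives a parallel alternating expression, and the required equality follows by matching blocks pairwise using the palindromic constraints of $W(w)$ placed at the symmetric positions. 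The main obstacle is that this matching cannot be packaged as a commutation of an auxiliary morphism $h$ on $\{A,B,C,D\}^{*}$ with $\pi$ satisfying $h\circ W=W\circ\varphi$: no such morphism exists, for instance it would need to satisfy both $h(A)=AAD$ and $h(A)h(D)=AAC$, which is impossible. Hence the block-level verification has to be carried out by a direct case-by-case argument, with the letter $w_i=4$, where $W(\sigma(4))=W(44)=ACCBCCACCBCC$ contains an interior $A$-block, requiring particular care.
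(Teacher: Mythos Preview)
Your reduction to the block alphabet $\{A,B,C,D\}$ is correct and genuinely illuminating: the identities $X\cdot 00=00\cdot R(X)$ and $E(A)=C$, $E(B)=D$ do give the commutation $00\cdot S(X)=\tau(X)\cdot 00$, and since $\{A,B,C,D\}$ is a code (each block begins with $00$ and contains no other occurrence of $00$, so concatenations decode uniquely), the equivalence ``$\xi(w)\cdot 00$ is an $S$-palindrome $\Leftrightarrow$ $W(w)$ is a $\pi$-palindrome'' is valid. This is a cleaner framing than the paper's, which works directly at the letter level.

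However, the proof is not complete. After the reduction you still owe the implication ``$W(w)$ a $\pi$-palindrome $\Rightarrow$ $W(\varphi(w))$ a $\pi$-palindrome,'' and your paragraph on this is only a sketch: the sentence ``the required equality follows by matching blocks pairwise using the palindromic constraints of $W(w)$'' does not constitute an argument, and you yourself then acknowledge the obstruction (no block-morphism $h$ with $h\circ W=W\circ\varphi$) and conclude that a direct case-by-case verification is needed---without carrying it out. That missing verification is exactly the substance of the lemma. The paper's proof does precisely this work: it runs an induction on $|w|$, uses the factor constraint on $\varphi^\omega(0)$ to pin down ten possible shapes of $w$ (such as $0102w'4$, $03w'03$, $4w'0102$, \dots) for which $\xi(w)00$ can be an $S$-palindrome, extracts a shorter $w'$ with the same property, and then checks mechanically that the conclusion for $\varphi(w')$ propagates to $\varphi(w)$. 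Your reduction does not bypass this step; it merely transports it to the alphabet $\{A,B,C,D\}$, where the same ten shapes reappear and the interior $A$ in $W(\sigma(4))$ that you flag is exactly what forces the analysis.

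To finish along your line you must either perform that case analysis explicitly (the factor hypothesis on $\varphi^\omega(0)$ is what keeps the number of cases finite, so do use it), or find a genuinely structural substitute---but the obstruction you identified makes the latter unlikely.
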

\begin{proof}
Let us prove the statement by induction on the length of $|w|$. 
If $w=\varepsilon$, the statement is trivial.
If $|w|=1$, then the only $S$-palindrome of the discussed form is $\xi(2)00=AC00$, where $A,B,C,D$ are given in~\eqref{eq:ABCD}, and the reader may easily check that $\xi(\varphi(2))00=\xi(03)00=AACC00$ is an $S$-palindrome, too. If $|w|=2$, then the only $S$-palindrome of the discussed form is $\xi(03)00$ and the reader may again check that $\xi(\varphi(03))00=\xi(0104)00=AADAACCBCC00$ is an $S$-palindrome, too. 
Now, consider $w$ of length $|w|\geq 3$. By the form of $\xi$ and $\varphi$ and since the only factors of length two of $\varphi^{\omega}(0)$ are $01, 02, 03, 04, 44$ and their mirror images, we observe that if $\xi(w)00$ is an $S$-palindrome, then there exists a~non-empty factor $w'$ of $\varphi^{\omega}(0)$ such that $w$ is of one of the following forms:
$$w\in \{0104, 0102w'4, 0103w'04, 0104w'0104, 02w'3, 03w'03, 04w'0103, 2w'2, 3w'02, 4w'0102\}\,.$$
Let us comment on the most complicated case. If $w$ starts in $01$, then $\xi(w)$ starts in $AAD$. Since $\xi(w)00$ is an $S$-palindrome, $\xi(w)$ ends in $BCC$, but then $w$ necessarily ends in $4$. This implies that $\xi(w)00$ starts in $AADAAC$ and ends in $ACCBCC$. We have several possibilities:
\begin{itemize}
\item $\xi(w)00=AADAACCBCC$, which leads to $w=0104$;
\item $\xi(w)00=AADAACACCBCC$, which means that $w=01024$, but $01024$ is not a~factor of $\varphi^{\omega}(0)$;
\item $\xi(w)00=AADAACA\cdots CACCBCC$, which leads to the form $w=0102w'4$ for some non-empty factor $w'$ of $\varphi^{\omega}(0)$;
\item $\xi(w)00=AADAACCAACCBCC$, which means that $w=010304$, but $010304$ is not a factor of $\varphi^{\omega}(0)$;
\item $\xi(w)00=AADAACCA\cdots CAACCBCC$, which means that $w=0103w'04$ for some non-empty factor $w'$ of $\varphi^{\omega}(0)$;
\item $\xi(w)00=AADAACCBCCAADAACCBCC$, which gives $w=01040104$, but $01040104$ is not a factor of $\varphi^{\omega}(0)$;
\item $\xi(w)00=AADAACCBCC\cdots AADAACCBCC$, which means that $w=0104w'0104$ for some non-empty factor $w'$ of $\varphi^{\omega}(0)$.
\end{itemize}

In any case, it is readily seen that $\xi(w')00$ is an $S$-palindrome, too. 
Since $|w'|<|w|$, by induction assumption, $\xi(\varphi(w'))00$ is an $S$-palindrome. It follows then by mechanical verification that $\xi(\varphi(w))00$ is an $S$-palindrome, too. 


\end{proof}

\begin{proposition}\label{prop:Spalindromes}  Let $\zz$ be a sequence defined in~\eqref{eq:Z}. Denote by $\mathcal{P}_S$  its  $S$-palindromic complexity function. Then ${P}_S(2n) \geq 2$ for every $n\geq 2$. 
\end{proposition}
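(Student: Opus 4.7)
The plan is to combine Lemma~\ref{lem: constructionS_palindromes} with a parity argument, which together reduce the desired bound $\mathcal{P}_S(2n)\ge 2$ to exhibiting merely one $S$-palindrome of each even length $2n\ge 4$.

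First, I would exploit the closure of $\mathcal{L}(\zz)$ under $S$, given by Proposition~\ref{pro:richnessZ}(3). This closure partitions the factors of length $m$ into $S$-palindromes and orbits $\{w,S(w)\}$ of size two, so $\mathcal{C}(m)-\mathcal{P}_S(m)$ is always even. Setting $m=2n$ with $n\ge 2$ and invoking $\mathcal{C}(2n)=8n+2$ from Proposition~\ref{pro:richnessZ}(1), I conclude that $\mathcal{P}_S(2n)$ is even for every $n\ge 2$. Hence it suffices to exhibit a single $S$-palindrome of length $2n$ in $\zz$ for each $n\ge 2$.

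Second, I would iterate Lemma~\ref{lem: constructionS_palindromes} from the base case $\xi(2)00=AC00$ to obtain that $\xi(\varphi^k(2))00$ is an $S$-palindrome for every $k\in\N$. Because every image $\xi(i)$ starts with $A=00101101$, and hence with $00$, the word $\xi(\varphi^k(2))00$ occurs as a factor of $\zz=\xi(\varphi^\omega(0))$; and since $\varphi$ is primitive, the lengths $L_k:=|\xi(\varphi^k(2))|+2$ grow without bound. To pass from these sporadic lengths to every even length, I would use the elementary observation that removing the first and last letter of any even-length $S$-palindrome of length $\ell\ge 4$ in $\zz$ yields an $S$-palindrome of length $\ell-2$, which is still a factor of $\zz$. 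Iterating this trimming on a sufficiently long palindrome from the lemma produces $S$-palindromes of every even length from $2$ up to $L_k$. Combined with the parity reduction of the first step, this gives $\mathcal{P}_S(2n)\ge 2$ for every $n\ge 2$.

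The main point requiring careful verification is that the lengths $L_k$ are themselves even, so that the trimming lands in the even-length class. Since $|\xi(i)|$ is odd precisely for $i\in\{1,4\}$ (namely $|\xi(1)|=11$ and $|\xi(4)|=43$), this amounts to checking that the combined number of occurrences of $1$ and $4$ in $\varphi^k(2)$ is even for every $k$. A short induction using the letter-count recurrences induced by $\varphi$ (notably $N_1(\varphi(w))=N_0(w)$ and $N_4(\varphi(w))=N_3(w)+2N_4(w)$), starting from the trivially even value at $k=0$, confirms that this parity is invariant along the $\varphi$-orbit of $2$. With this check in place, the three ingredients assemble into a proof of the proposition.
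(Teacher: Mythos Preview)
Your proposal is correct and shares its core with the paper's proof: both iterate Lemma~\ref{lem: constructionS_palindromes} from the seed $\xi(2)00=AC00$ to obtain arbitrarily long $S$-palindromic factors $x_k=\xi(\varphi^k(2))00$ of $\zz$, and then pass to central factors to hit every even length~$2n$. The one genuine twist is how you lift ``at least one'' to ``at least two''. You use a parity count: closure under $S$ forces $\mathcal{C}(2n)-\mathcal{P}_S(2n)$ to be even, and $\mathcal{C}(2n)=8n+2$ is even by Proposition~\ref{pro:richnessZ}, so $\mathcal{P}_S(2n)\ge 1$ implies $\mathcal{P}_S(2n)\ge 2$. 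The paper instead observes directly that if $w$ is the central factor of $x_k$ of length $2n\ge 4$, then $R(w)$ is a second $S$-palindromic factor (by closure under $R$ and since $S$ and $R$ commute), and $w\neq R(w)$ because $w$ contains the letter $1$ or $2$, whence $RS(w)\neq w$. Both devices are short; yours leans on the exact factor complexity, the paper's does not.

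One simplification worth noting: your inductive verification that $L_k$ is even (which, as stated, actually requires tracking the pair $(N_1+N_4,\,N_0+N_3)\bmod 2$ rather than $N_1+N_4$ alone) can be bypassed entirely. Since none of $000$, $102$, $201$ is a factor of $\zz$, there is no $S$-palindromic factor of odd length $\ge 3$; hence $x_k$, being an $S$-palindrome of length $\ge 18$, is automatically of even length. This is the route the paper takes, and it replaces the letter-count recurrences by a one-line observation.
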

\begin{proof}
According to Lemma~\ref{lem: constructionS_palindromes}, the word $x_k=\xi(\varphi^k(2))00$ for $k\in \mathbb N$ is an $S$-palindrome. It suffices to observe that $\xi(2)00=AC00$ is an $S$-palindrome, where $A,C$ are defined in~\eqref{eq:ABCD}. Moreover, $x_k$ is obviously a factor of $\zz$ and $x_k$ is of even length. An $S$-palindrome of odd length could contain only $0$ as its central factor, however none of the words $000$, $102$, $201$ is a factor of $\zz$. Consequently, for any $n\geq 2$, the central factor of length $2n$ of the word $x_k$ and its reversal are distinct $S$-palindromes (they contain both letters $1$ and $2$, which guarantees their difference).
\end{proof}
\begin{theorem}\label{thm:G-richness of z} The sequence $\zz$ is $G$-rich.     
\end{theorem}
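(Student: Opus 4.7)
The plan is to apply the palindromic/factor complexity criterion of Theorem~\ref{thm:Nerovnost} to force equality in~\eqref{eq:nepreleze} for every $n\geq 3$, and then to handle the two small graphs $\Gamma_1(\zz)$ and $\Gamma_2(\zz)$ directly. Closure of $\mathcal{L}(\zz)$ under the group $G=\{I,R,S,RS\}$ is immediate: closure under $R$ holds because $\zz$ is classically rich and (uniformly) recurrent, closure under $S$ is Proposition~\ref{pro:richnessZ}(3), and closure under the composition $RS$ follows. By Example~\ref{ex:disting3}, the integer $N=3$ is $G$-distinguishing on $\zz$, so the inequality in Theorem~\ref{thm:Nerovnost}(1) is available for every $n\geq 3$.

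The heart of the argument is a squeeze on $\mathcal{P}_S(n)+\mathcal{P}_S(n+1)$. Classical richness of $\zz$, combined with the palindromic/factor complexity characterization (or, alternatively, a direct computation from the explicit values in Proposition~\ref{pro:richnessZ}), yields
\[
\mathcal{P}_R(n)+\mathcal{P}_R(n+1)=\mathcal{C}(n+1)-\mathcal{C}(n)+2\qquad\text{for every }n\geq 0.
\]
Substituting this identity into~\eqref{eq:nepreleze} with $\#G=4$ and $G^{(2)}=\{R,S\}$ gives the upper bound $\mathcal{P}_S(n)+\mathcal{P}_S(n+1)\leq 2$ for every $n\geq 3$. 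Proposition~\ref{prop:Spalindromes} provides the matching lower bound: for each $n\geq 3$ at least one of $n,n+1$ is an even integer not smaller than $4$, and hence $\mathcal{P}_S(n)+\mathcal{P}_S(n+1)\geq 2$. Consequently equality holds in~\eqref{eq:nepreleze} for every $n\geq 3$, and Theorem~\ref{thm:Nerovnost}(2) delivers Property $G$-$tls(3)$.

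It remains to upgrade $G$-$tls(3)$ to $G$-$tls(1)$, which is the very definition of $G$-richness. I would do this by extracting the length-$1$ and length-$2$ factors of $\zz$ from a sufficiently long prefix, identifying the left/right special ones, grouping them into $G$-orbits, and drawing $\Gamma_1(\zz)$ and $\Gamma_2(\zz)$ explicitly; one then verifies that each graph is a tree after loop removal and that every loop corresponds to an $R$- or $S$-palindrome. I expect this small-scale verification to be essentially mechanical, because the quantitative content of the theorem is already concentrated in the squeeze above, which pins down $\mathcal{P}_S(n)+\mathcal{P}_S(n+1)=2$ without our having to compute $\mathcal{P}_S$ directly. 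The main obstacle is therefore the careful bookkeeping at orders $n\in\{1,2\}$; the rest of the argument is a clean application of Theorem~\ref{thm:Nerovnost} feeding on Propositions~\ref{pro:richnessZ} and~\ref{prop:Spalindromes}.
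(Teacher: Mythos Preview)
Your proposal is correct and follows essentially the same route as the paper: verify $\Gamma_1(\zz)$ and $\Gamma_2(\zz)$ by hand, then use Example~\ref{ex:disting3} and Theorem~\ref{thm:Nerovnost} together with Propositions~\ref{pro:richnessZ} and~\ref{prop:Spalindromes} to force equality in~\eqref{eq:nepreleze} for all $n\geq 3$. The only cosmetic difference is that you package the arithmetic as a squeeze $\mathcal{P}_S(n)+\mathcal{P}_S(n+1)\leq 2$ via the classical richness identity $\mathcal{P}_R(n)+\mathcal{P}_R(n+1)=\Delta\mathcal{C}(n)+2$, whereas the paper plugs the explicit values of $\mathcal{C}$ and $\mathcal{P}_R$ from Proposition~\ref{pro:richnessZ} directly into~\eqref{eq:nepreleze}; the ingredients and logic are identical.
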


\begin{proof}
By Definition~\ref{def:Grichness} we have to show that for every $n\in \N, n\geq 1,$ the graph $\Gamma_n(\zz)$ after removing loops is a~tree and that every loop in $\Gamma_n(\zz)$ is an $R$-palindrome or an $S$-palindrome. 

We start by showing that it is true for $n=1$ and $n=2$ and then we complete the proof by showing that $\zz$ has Property $G$-$tls(3)$. 
Consider $n=1$.  All letters are left or right special factors, hence the graph has two vertices $[0]$ and $[1]=\{1,2\}$, there is one edge $[01]=\{01,10,02,20\}$ between those two vertices, the only loop corresponding to $0$ is $[00]$ and the only loop corresponding to $[1]$ is $[11]=\{11,22\}$. Both loops are formed by $R$-palindromes. See Figure~\ref{fig:SmallGraphs} (left).

Consider $n=2$. There are two vertices of $\Gamma_2(\zz)$: $[00]$ and $[01]$ and the only edge connecting them is $[001]$.
The loops corresponding to $[01]$ are $[010]=\{010, 020\}, \ [101]=\{101, 202\}$ and $[0110]=\{0110, 0220\}$ and they all are $R$-palindromes. See Figure~\ref{fig:SmallGraphs} (right).

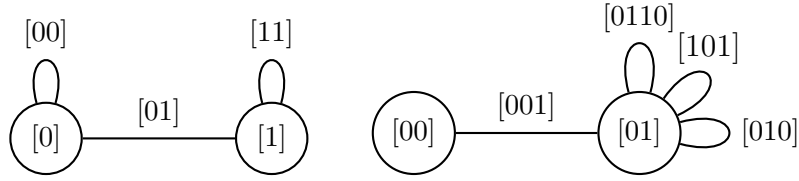
\begin{figure}[h!]
\begin{center}
\begin{tabular}{cccc}
\begin{tikzpicture}[every loop/.style={},node distance=3cm,
                    thick,main node/.style={circle,draw,font=\sffamily\small\bfseries}]

  \node[main node] (1) {$[0]$};
  \node[main node] (2) [right of=1] {$[1]$};

  \path[every node/.style={font=\sffamily\small}]
    (1) edge node [above] {$[01]$} (2)
     (1) edge [loop above] node [above] {$[00]$} (1)
     (2) edge [loop above] node [above] {$[11]$} (2);

\end{tikzpicture} && \begin{tikzpicture}[every loop/.style={},node distance=3cm,
                    thick,main node/.style={circle,draw,font=\sffamily\small\bfseries}]

  \node[main node] (1) {$[00]$};
  \node[main node] (2) [right of=1] {$[01]$};

  \path[every node/.style={font=\sffamily\small}]
    (1) edge node [above] {$[001]$} (2)
     (2) edge [loop right] node [right] {$[010]$} (2)
     (2) edge [loop above] node [above] {$[0110]$} (2);
   
    \path (2) edge [out=25,in=55,looseness=8] node [above] {$[101]$} (2);

\end{tikzpicture}
\end{tabular}
\caption{The graphs of symmetries ${\Gamma}_1(\zz)$ (left) and ${\Gamma}_2(\zz)$ (right) for $\zz$ defined in~\eqref{eq:Z} and the group $G=\{I,R, S, RS\}$.}\label{fig:SmallGraphs}
\end{center}
\end{figure}

According to Example~\ref{ex:disting3}, the number $N=3$ is $G$-distinguishing on $\zz$. Using the second part of Theorem~\ref{thm:Nerovnost}, to deduce Property $G$-$tls(3)$ it is necessary and sufficient to prove for every $n \geq 3$ the equality 
\begin{equation}\label{eq:CoTreba}
\mathcal{C}(n+1)- \mathcal{C}(n) +4 = \mathcal{P}_{R}(n) +\mathcal{P}_{R}(n+1) + \mathcal{P}_{S}(n) +\mathcal{P}_{S}(n+1). 
\end{equation}
We will make use of Propositions~\ref{pro:richnessZ} and~\ref{prop:Spalindromes}.

Inserting $n=3$, we have on the left hand side $\mathcal{C}(4)- \mathcal{C}(3) +4 =  6+4 = 10$ and on the right hand side $\mathcal{P}_{R}(3) +\mathcal{P}_{R}(4) + \mathcal{P}_{S}(3) +\mathcal{P}_{S}(4)\geq 4 + 4 +2 = 10$. By the first part of Theorem~\ref{thm:Nerovnost}, the equality~\eqref{eq:CoTreba} follows. 

If $n\geq 4$, then we have on the left hand side $\mathcal{C}(n+1)- \mathcal{C}(n) +4 =  4+4 = 8$ and on the right hand side
$\mathcal{P}_{R}(n) +\mathcal{P}_{R}(n+1) + \mathcal{P}_{S}(n) +\mathcal{P}_{S}(n+1)\geq 4 + 2+ 2 = 8$. Again, the first part of Theorem~\ref{thm:Nerovnost} forces the equality \eqref{eq:CoTreba}. 

\end{proof}

\section{G-richness of the rich binary sequences with the minimum critical exponent}\label{sec:binary}
The authors of~\citep{CuMoRa2020} showed that, up to a letter permutation, there are two sequences with the minimum critical exponent over the binary alphabet $\{0,1\}$, namely $f(h^{\omega}(0))$ and $f(g(h^{\omega}(0)))$, where the morphisms $g,h:\{0,1,2\}^*\to \{0,1,2\}^*$ and $f:\{0,1,2\}^*\to \{0,1\}^*$ are defined by letter images as follows
$$\begin{array}{ccc}
f: \left\{\begin{array}{lll}
    0&\to & 0  \\
    1&\to & 01  \\    
    2&\to & 011
\end{array}\right. & 
g: \left\{\begin{array}{lll}
    0&\to & 011  \\
    1&\to & 0121  \\    
    2&\to & 012121
\end{array}\right. & 
h: \left\{\begin{array}{lll}
    0&\to & 01  \\
    1&\to & 02  \\    
    2&\to & 022
\end{array}\right.\,.
\end{array}$$
They also proved that both $f(h^{\omega}(0))$ and $f(g(h^{\omega}(0)))$ are complementary symmetric Rote sequences. 
Pelantová and Starosta~\citep{PeSt2016} proved that every complementary symmetric Rote sequence is $G$-rich for the group $G=\{I, R, E, ER\}$, where $E$ is the antimorphism defined by $E(0)=1$ and $E(1)=0$. Hence, also in the binary case, the rich sequences with the minimum critical exponent are $G$-rich.

\section{Open problem}
A natural question arises. Is the repetition threshold of rich sequences on alphabets of size larger than three also reached by $G$-rich sequences for some groups $G$ containing more than one antimorphism?


\end{document}